	\newtheorem{thm}{Theorem}
	\newtheorem{prop}[thm]{Proposition}
	\newenvironment{proof}{\noindent\sl Proof. \rm}{$\,_\Box$\par\medskip}
	\newcommand{\G}[1]{\operatorname{G}\left(#1\right)}
	\newcommand{\Gs}[2]{\operatorname{G}_{#1}\left(#2\right)}
	\newcommand{\proj}[1]{\operatorname{Proj}\left(#1\right)}
\begin{document}
\title{Corrigendum to ``Reduced Tangent Cones\\ and Conductor at Multiplanar\\ Isolated Singularities''}
\author{Alessandro De Paris and Ferruccio Orecchia}
\date{}
\maketitle
\noindent\textbf{Keywords:} Associated graded ring, Conductor, Singularities, Tangent cone.\\
\textbf{MSC 2010:} 13A30, 13H99, 14J17\bigskip

The statement of \cite[Remark~2.2]{DO} is wrong (basically, a hypothesis is missed). That remark is used at the beginning of the proof of \cite[Theorem~3.2]{DO}:
\begin{equation}\label{P1}
\parbox{0.85\textwidth}{According to Proposition~2.1 and Remark~2.2, the hypotheses~1 and~3 imply that $\G{\overline{A}}$ is reduced.}
\end{equation}
It is also implicitly used in the middle of the proof:
\begin{equation}\label{P2}
\parbox{0.85\textwidth}{the hypothesis~3 in the statement implies that $\nu_n$ is an isomorphism for $n>>0$.}  
\end{equation}
In \eqref{P1}, the outcome $\proj{\G{A}}\cong\proj{\Gs{\overline{\mathfrak{m}}}{\overline{A}}}$ of the remark implies that $\G{\overline{A}}$ is reduced because of the hypothesis~1 in the statement and \cite[Proposition~2.1]{DO}. In \eqref{P2}, the fact that $\nu_n$ is an isomorphism for $n>>0$ is a direct consequence of $\overline{\mathfrak{m}}^n=\mathfrak{m}^n$ (and justifies the injectivity of $S\hookrightarrow\G{\overline{A}}$). Those outcomes hold true in the hypotheses of \cite[Theorem~3.2]{DO}, simply because by adding the hypothesis that $\proj{\G{A}}$ is reduced (hypothesis~1 in the theorem), \cite[Remark~2.2]{DO} becomes true. The explanation is given by Proposition~\ref{C1} below, which therefore can be used as a replacement for the wrong remark (provided that the hypothesis~1 is also mentioned in \eqref{P2}).

In the statement of Proposition~\ref{C1}, the main thesis is that $\nu_n$ is an isomorphism for $n>>0$. This also implies that $\mathfrak{m}^n=\overline{\mathfrak{m}}^n$, by the Nakayama's lemma, but this equality is not really needed (when we say `Then (2) is satisfied because $\mathfrak{m}^{n_0+i}=\mathfrak{J}^{n_0+i}$ for $i>>0$', (2) can easily be justified in another way).

\begin{prop}\label{C1}
Under the assumptions before \cite[Remark~2.2]{DO}, if $\proj{\G A}$ is reduced and $\sqrt{\mathfrak{b}}=\mathfrak{m}$, then $\nu_n$ is an isomorphism for $n>>0$, and in turn this implies that
\[
\proj{\G{A}}\cong\proj{\Gs{\overline{\mathfrak{m}}}{\overline{A}}}\;.
\]
\end{prop}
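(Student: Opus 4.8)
The plan is to reduce everything to the single equality $\mathfrak m^n=\overline{\mathfrak m}^n$ for $n\gg0$. Granting the trivial inclusion $\mathfrak m^n\subseteq\overline{\mathfrak m}^n$ (from $\mathfrak m\subseteq\overline{\mathfrak m}$), such an equality turns $\nu_n$ into the identity, hence into an isomorphism; and a graded homomorphism that is bijective in every sufficiently high degree induces an isomorphism on $\operatorname{Proj}$, since $\operatorname{Proj}$ depends only on the high‑degree part of a graded ring. Thus both assertions of the proposition will follow once I prove the reverse inclusion $\overline{\mathfrak m}^n\subseteq\mathfrak m^n$ for $n\gg0$.

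I would first isolate the two mechanisms to be combined. From $\sqrt{\mathfrak b}=\mathfrak m$ choose $N$ with $\mathfrak m^N\subseteq\mathfrak b$; since $\mathfrak b$ is an ideal of $\overline A$ contained in $A$, this yields the absorption $\mathfrak a\cdot(\mathfrak m^N\overline A)\subseteq\mathfrak a$ for every ideal $\mathfrak a$ of $A$ — the sole use of the conductor, and the device that lets one push a high power of $\overline{\mathfrak m}$ back inside a power of $\mathfrak m$. From the reducedness of $\proj{\G A}$ I record the equivalent fact that $\G A$ has no nonzero homogeneous nilpotents in high degree (its nilradical, being unsupported on $\operatorname{Proj}$, vanishes in all large degrees).

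The hard part, which I expect to be the main obstacle, is to deduce from reducedness that $\overline{\mathfrak m}=\mathfrak m\overline A$ — that no branch of the normalization is tangentially thickened. The cusp $k[[t^2,t^3]]$ is the prototype of what must be excluded: there $\mathfrak m\overline A=(t^2)\subsetneq(t)=\overline{\mathfrak m}$, the map $\nu_1$ is zero, and correspondingly $\proj{\G A}\cong\proj{k[x,y]/(y^2)}$ is non‑reduced (the class of $t^3$ being a nonzero nilpotent). In general a strict inclusion $\overline{\mathfrak m}\supsetneq\mathfrak m\overline A$ makes $\nu_1$ non‑surjective, and through the normalization relationship underlying \cite[Proposition~2.1]{DO} — together with the standing finiteness and normality of $\overline A$ — this is incompatible with the reducedness of $\proj{\G A}$; it is also the equimultiplicity statement $e(\mathfrak m,A)=e(\overline{\mathfrak m},\overline A)$. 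Once $\overline{\mathfrak m}=\mathfrak m\overline A$ is secured, $\overline{\mathfrak m}^m=\mathfrak m^m\overline A$ holds for all $m$.

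The remaining step is then routine. For $n\ge N$ one has $\overline{\mathfrak m}^n=\mathfrak m^n\overline A\subseteq\mathfrak b\subseteq A$, so any $z\in\overline{\mathfrak m}^n$ lies in $A$; let $j$ be its $\mathfrak m$‑order. Since $\overline{\mathfrak m}^{sN}=\mathfrak m^{sN}\overline A=(\mathfrak m^N\overline A)^{s}\subseteq\mathfrak b^{s}\subseteq\mathfrak m^{s}$, we get $j\ge\lfloor n/N\rfloor$, which is large once $n$ is. If $j<n$ then the leading form $z^\ast\in\G A_j$ is nonzero while $z\in\overline{\mathfrak m}^{j+1}=\mathfrak m^{j+1}\overline A$; raising to a power $k\ge N+1$ and absorbing,
\[
z^{k}\in\mathfrak m^{(j+1)k}\overline A=\mathfrak m^{jk+1}\cdot\bigl(\mathfrak m^{k-1}\overline A\bigr)\subseteq\mathfrak m^{jk+1},
\]
so $(z^\ast)^{k}=0$ in $\G A$. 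As $j$ is large, reducedness forces $z^\ast=0$, contradicting that $j$ is the $\mathfrak m$‑order. Hence $j\ge n$, i.e. $\overline{\mathfrak m}^n\subseteq\mathfrak m^n$ for $n\gg0$. With the trivial inclusion this gives $\mathfrak m^n=\overline{\mathfrak m}^n$, so $\nu_n$ is an isomorphism for $n\gg0$ and, as explained, $\proj{\G A}\cong\proj{\Gs{\overline{\mathfrak m}}{\overline A}}$.
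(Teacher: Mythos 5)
Your argument is correct in substance, but you have manufactured your own main obstacle. In the setting of the proposition (the ``assumptions before \cite[Remark~2.2]{DO}''), $\overline{\mathfrak{m}}$ \emph{is} the extended ideal $\mathfrak{m}\overline{A}$: the paper's proof uses $\overline{\mathfrak{m}}^{n_0}=\mathfrak{m}^{n_0}\overline{A}$ and $\overline{\mathfrak{m}}^{n+1}=\mathfrak{m}\,\overline{\mathfrak{m}}^{n}$ without comment, as part of the standing notation. So the step you flag as ``the hard part'' --- deducing $\overline{\mathfrak{m}}=\mathfrak{m}\overline{A}$ from reducedness --- is not something the proposition asks for, which is fortunate, because you do not actually prove it: the appeal to ``the normalization relationship underlying \cite[Proposition~2.1]{DO}'' and to equimultiplicity is a gesture, not an argument, and as written it would be the one genuine gap in your proof. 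Your instinct behind it is sound, though: without some such hypothesis the conclusion fails (your cusp example is exactly the right prototype), and this is precisely why, in \cite[Section~5]{DO}, the equality $\mathfrak{J}=\mathfrak{m}\overline{A}$ has to be secured separately (from $\mathfrak{J}=\mathfrak{m}B$ and $\overline{A}=B$) before anything like the remark can be invoked.

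With that unnecessary step deleted, the rest of your proof is correct and takes a genuinely different route from the paper's. Both proofs rest on the same two mechanisms: conductor absorption ($\mathfrak{m}^{N}\overline{A}\subseteq\mathfrak{b}\subseteq A$ for some $N$) and the fact that reducedness of $\proj{\G{A}}$ kills all homogeneous nilpotents of $\G{A}$ in sufficiently large degrees (valid, as you note, since $\G{A}$ is standard graded and noetherian). The paper uses them only to prove that $\nu_n$ is \emph{injective} for $n\ge n_1$ (if $f\in\mathfrak{m}^{n}\smallsetminus\mathfrak{m}^{n+1}$ lies in $\overline{\mathfrak{m}}^{n+1}$, then $f^{n_0}\in\mathfrak{m}^{nn_0+1}$, so the leading form is nilpotent), and then obtains surjectivity for $n>>0$ by a separate pigeonhole on lengths, comparing $l\left(\overline{\mathfrak{m}}^{n_0}/\overline{\mathfrak{m}}^{n}\right)$ with $l\left(\mathfrak{m}/\mathfrak{m}^{n}\right)$ as in the paper's inequality of dimension sums. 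You instead prove the stronger ideal-theoretic statement $\overline{\mathfrak{m}}^{n}\subseteq\mathfrak{m}^{n}$ for $n>>0$ directly: the lower bound $j\ge\lfloor n/N\rfloor$ on the $\mathfrak{m}$-order (via $\overline{\mathfrak{m}}^{sN}\subseteq\mathfrak{b}^{s}\subseteq\mathfrak{m}^{s}$) lets the nilpotency trick apply to an \emph{arbitrary} element of $\overline{\mathfrak{m}}^{n}$, not merely to kernel elements of degree $n$, and your absorption computation $z^{k}\in\mathfrak{m}^{jk+1}\left(\mathfrak{m}^{k-1}\overline{A}\right)\subseteq\mathfrak{m}^{jk+1}$ for $k\ge N+1$ is correct. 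This buys you the equality $\mathfrak{m}^{n}=\overline{\mathfrak{m}}^{n}$ up front --- which the paper derives only a posteriori via Nakayama and explicitly remarks is not really needed --- and it dispenses with the Hilbert-function counting entirely, at the modest cost of invoking Krull's intersection theorem so that the $\mathfrak{m}$-order of a nonzero $z$ is well defined. The final passage from isomorphisms in high degrees to $\proj{\G{A}}\cong\proj{\Gs{\overline{\mathfrak{m}}}{\overline{A}}}$ is the same standard fact in both proofs.
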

\begin{proof}
Since $A$ is noetherian and $\sqrt{\mathfrak{b}}=\mathfrak{m}$, we have $\mathfrak{m}^{n_0}\subseteq\mathfrak{b}$ for some $n_0$, hence $\overline{\mathfrak{m}}^{{n_0}}=\mathfrak{m}^{n_0}\overline{A}\subseteq\mathfrak{b}\overline{A}=\mathfrak{b}\subseteq\mathfrak{m}$.

If the class $x$ of a $f\in\mathfrak{m}^n\smallsetminus\mathfrak{m}^{n+1}$ in $\G A_n$ is in the kernel of $\nu_n$ that is, $f$ belongs to $\overline{\mathfrak{m}}^{n+1}$, we have
\[
f^{n_0}\in\overline{\mathfrak{m}}^{nn_0+n_0}=\mathfrak{m}^{nn_0+n_0}\overline{A}=\mathfrak{m}^{nn_0}\overline{\mathfrak{m}}^{n_0}\subseteq\mathfrak{m}^{nn_0+1}\;,
\]
hence $x^{n_0}=0\in\G A_{nn_0}$, that is, $x$ is nilpotent. Since $\proj{\G A}$ is reduced, there exists $n_1$ such that for all $n\ge {n_1}$ there is no nilpotent $x\in\G{A}_n\smallsetminus\{0\}$, and therefore $\nu_n$ is injective for all $n\ge n_1$.

Let us consider the powers $\mathfrak{m}^n$ and $\overline{\mathfrak{m}}^n$ as $A$-modules, and denote by $l(M)$ the length of an arbitrary $A$-module $M$. Note also that the graded components $\Gs{\overline{\mathfrak{m}}}{\overline{A}}_n=\overline{\mathfrak{m}}^n/\overline{\mathfrak{m}}^{n+1}$ are vector spaces over the residue field $k:=A/\mathfrak{m}$, because $\overline{\mathfrak{m}}^{n+1}=\mathfrak{m}\overline{\mathfrak{m}}^n$; the same is obviously true for the graded components of $\G A$. For every $n\ge n_0$ we have $\mathfrak{m}^n\subseteq\overline{\mathfrak{m}}^n\subseteq\overline{\mathfrak{m}}^{n_0}\subseteq\mathfrak{m}$, hence
\begin{equation}\label{sums}
\sum_{i=n_0}^{n-1}\dim_k\Gs{\overline{\mathfrak{m}}}{\overline{A}}_i=l\left(\frac{\overline{\mathfrak{m}}^{n_0}}{\overline{\mathfrak{m}}^n}\right)\le l\left(\frac{\mathfrak{m}}{\mathfrak{m}^n}\right)=\sum_{i=1}^{n-1}\dim_k{\G A}_i\;.
\end{equation}
Note that $\dim_k\Gs{\overline{\mathfrak{m}}}{\overline{A}}_i\ge\dim_k{\G A}_i$ for all $i\ge n_1$, because $\nu_i$ is injective for that values. Suppose now that it is not true that $\nu_n$ is an isomorphism for all $n>>0$. Then we can find as many values of $i$ as we want for which $\dim_k\Gs{\overline{\mathfrak{m}}}{\overline{A}}_i>\dim_k{\G A}_i$. This contradicts \eqref{sums} for a sufficiently large~$n$.

Finally, it is well known that if a graded ring homomorphism $S\to T$ preserving degrees induces isomorphisms on all components of sufficiently large degrees, then it induces an isomorphism $\proj T\overset\sim\to\proj S$.
\end{proof}

Next, \cite[Remark~2.2]{DO} is invoked in \cite[Section~5, p.~2977, lines~20--21]{DO}:
\begin{equation}\label{P3}
\parbox{0.85\textwidth}{Since $\sqrt{\mathfrak{b}}=\mathfrak{m}$ and $\mathfrak{J}=\mathfrak{m}\overline{A}$, we have $\proj{\G{A}}\cong\proj{\G{\overline{A}}}$ by Remark~2.2.}
\end{equation}
At that point, we do \emph{not} have that $\proj{\G{A}}$ is reduced, and this fact is part of n.~3 of Claim~5.1 (the only point of that Claim that was still to be proved). In what follows we assume the notation of \cite[Section~5]{DO}. To fix the mistake, the sentence \eqref{P3} above must be dismissed, but we can keep the fact that $\mathfrak{J}=\mathfrak{m}\overline{A}$ (which is true, because it had been proved earlier that $\mathfrak{J}=\mathfrak{m}B$ and $\overline{A}=B$).

Let us look at the subsequent discussion in \cite{DO}. First of all, the isomorphism $\G{\overline{A}}\cong\prod_{i=1}^e\Gs{\mathfrak{a}_i}{k[t,s]}$ is pointed out. Here, let us also denote by $\pi_i:\G{\overline{A}}\to\Gs{\mathfrak{a}_i}{k[t,s]}=k\left[\overline{t-a_{i1}},\overline{s}_i\right]$ the projection on the $i$th factor. Let us split as follows the natural homomorphism displayed at \cite[p.~2977, line~25]{DO}:
\begin{equation}\label{H}
k[X_0,\ldots ,X_r]\to\G A\overset{\nu}\longrightarrow\G{\overline{A}}\overset{\!\!\sim}\to\prod_{i=1}^e\Gs{\mathfrak{a}_i}{k[t,s]}\overset{\pi_i}\longrightarrow k\left[\overline{t-a_{i1}},\overline{s}_i\right]\;.
\end{equation}
(in the description $X_j\mapsto\overline{x_j}$ given in the paper, $\overline{x_j}$ denotes the class of $x_j\in R\subset k[t,s]$ in the degree one component of $\Gs{\mathfrak{a}_i}{k[t,s]}$). Assuming that $X_0,\ldots ,X_r$ act as the coordinate functions on $k^{r+1}$, and the linear forms in $k[X_0,\ldots ,X_r]$ act accordingly, for each choice of distinct $i_0,i_1\in\left\{1,\ldots ,e\right\}$, since $l_{i_0}$ and $l_{i_1}$ are skew lines we can fix linear forms $T_{i_0i_1}$,  $S_{i_0i_1}$ such that
\begin{itemize}
\item $T_{i_0i_1}$ vanishes on $\mathbf{a}_{i_0}$, $\mathbf{b}_{i_0}$, $\mathbf{b}_{i_1}$, and takes value $1/\rho_{i_1}$ on $\mathbf{a}_{i_1}$, with $\rho_{i_1}$ being as in \cite[Footnote~4]{DO};
\item $S_{i_0i_1}$ vanishes on $\mathbf{a}_{i_0}$, $\mathbf{b}_{i_0}$, $\mathbf{a}_{i_1}$, and takes value $1$ on $\mathbf{b}_{i_1}$.
\end{itemize}
Taking into account \cite[Footnote~4]{DO}, the images of $T_{i_0i_1}$ and $S_{i_0i_1}$ in $k\left[\overline{t-a_{i1}},\overline{s}_i\right]$ through \eqref{H} vanish for $i=i_0$, and equal $\overline{t-a_{i1}}$ and $\overline{s}_i$, respectively, for $i=i_1$. It follows that the image of $T_{1i}\cdots T_{(i-1)i}T_{(i+1)i}\cdots T_{ei}$ in $\prod_{i=1}^e\Gs{\mathfrak{a}_i}{k[t,s]}$ through \eqref{H} is $\left(0,\ldots,\overline{t-a_{i1}}^{e-1},0,\ldots,0\right)$, where the nonzero component occurs at the $i$-th place. In a similar way, every element of the form $\left(0,\ldots,\overline{t-a_{i1}}^a\overline{s}^b,0,\ldots,0\right)$, with $a+b\ge e-1$, is the image of a product of $a+b$ linear forms, suitably chosen among the $T_{i_0i_1}$s and the $S_{i_0i_1}$s. It follows that the homomorphism $k[X_0,\ldots ,X_r]\to\prod_{i=1}^e\Gs{\mathfrak{a}_i}{k[t,s]}$ is surjective in degrees $\ge e-1$. Hence $\nu_{d}$ is surjective for $d\ge e-1$, and is in fact an isomorphism by \cite[Lemma~3.1]{DO}.
Thus $\nu$ induces the required isomorphism $\proj{\G{A}}\cong\proj{\G{\overline{A}}}$.

Finally, let us take this occasion to also fix a few typos and mild issues.

\begin{enumerate}
\item
In the summarized description of the main result \cite[Theorem~3.2]{DO} given in \cite[Introduction]{DO} at the beginning of p.~2970, it is missed the hypothesis (duly reported in the statement of the theorem and in the abstract) that $\proj{\G{A}}$ must be reduced.

\item
Typo at [p.~2971, lines 12--14]: ``$\mathbb{P}^{r+1}:=\proj{k[X_1,\ldots ,X_k]}\ldots$ subscheme of $\mathbb{P}^{r+1}$'' should be replaced with ``$\mathbb{P}^r:=\proj{k[X_0,\ldots ,X_r]}\ldots$ subscheme of $\mathbb{P}^r$''.

\item
At the beginning of the proof of \cite[Theorem~4.1]{DO} one finds
\[
\parbox{0.85\textwidth}{The hypothesis~1 states, in particular, that $Y:=\proj{\G{A}}$ is reduced. Then, taking also into account the hypothesis~2, we immediately deduce from the results of Section~2 that $H\left(\overline{A},n\right)=(n+1)e$.}
\]
Since $\proj{\G{A}}$ is reduced, Proposition~\ref{C1} can serve as a replacement for \cite[Remark~2.2]{DO} in this situation, too.

\item
In \cite[Section~5]{DO} (main example), $e$ must be assumed $\ge 2$ and $n$ has to be replaced with $r$ in some lists such as $x_0,\ldots , x_n$,  $f_2,\ldots,f_n$, $h_2,\ldots, h_n$ and $X_0,\ldots ,X_n$, as well as in the exponent $n+1$ in \cite[Equation~(6)]{DO}.

In the sentence below Equation~(6), `such that $g(\overline{t})$ takes one of the above mentioned special values' should be completed with `or $g'(\overline{t})=0$'. Later, `vanishes in the ring $T:=B_f\otimes_{A_f}B_f$' should be `vanishes over the ring $T:=B_f\otimes_{A_f}B_f$' (that is, each component of the $(n+1)$-tuple under consideration vanishes in $T$).
\end{enumerate}

\end{document}